\title{Loop conditions}
\author{Miroslav Ol\v s\'ak} 
\date{\today}
\def\medskip
  \def\svgwidth{#}
  \def\svgwidth{#1}
\newcommand{\var}[1]{\mathcal{#1}}
\newcommand{\alg}[1]{\mathbf{#1}}
\newcommand{\lc}[1]{#1}
\newcommand{\relstr}[1]{\mathbb{#1}}
\newcommand{\tuple}[1]{\mathbf{#1}}
\newtheorem{theorem}{Theorem}
\newtheorem{definition}{Definition}
\newtheorem{proposition}{Proposition}
\newtheorem{claim}{Claim}
\newtheorem{corollary}{Corollary}
\begin{document}
\maketitle

\begin{abstract}
We prove that the existence of a term $s$ satisfying $s(x,y,y,z,z,x)=s(y,x,z,y,x,z)$ is the weakest non-trivial strong Maltsev condition given by a single identity. 
\end{abstract}

\section*{Acknowledgement}

This research was supported by the Czech Science Foundation (GAČR), grant 13-01832S.

\section{Introduction}

One of the most interesting by-products of the research on the fixed-template constraint satisfaction problems is the result of Mark Siggers~\cite{Siggers}, which says that there exists a weakest non-trivial strong Maltsev condition for idempotent locally finite varieties.

Let us recall that a \emph{strong Maltsev condition} (see \cite{BS,MMT,Bergman}) is a condition for a variety (or an algebra) postulating the existence of finitely many terms satisfying a given finite set of identities. Such conditions can be compared by their strength: a condition $C$ is \emph{weaker} than $D$ if each variety satisfying $D$ also satisfies $C$. The weakest conditions are those satisfied in every variety, we call them \emph{trivial}. The concept of strength can be naturally relativized to special types of varieties, such as idempotent locally finite varieties in the above mentioned result of Siggers.

His weakest non-trivial condition  has an especially simple form: it is given by a single linear identity in one operation symbol appearing on both sides, namely 
$$s(x,y,y,z,z,x)=s(y,x,z,y,x,z). $$
A well known fact, stated here as Proposition~\ref{loop-eq}, is that
strong Maltsev conditions of this form can be characterized by the existence of a loop
in certain binary relations compatible with algebras in the variety. 
This structural property of compatible relations proved useful (see~\cite{Cyclic,AbsSurvey}) and inspired the following terminology.

\begin{definition}
A \emph{loop condition} is a strong Maltsev condition given by a single identity of the form $$t(x_1, \dots, x_n) = t(y_1, \dots, y_n),$$
where $x_1, \dots, x_n, y_1, \dots, y_n$ are variables.
\end{definition}

In idempotent locally finite varieties, the 6-ary loop condition by Siggers is equivalent to many other loop conditions, such as the existence of a 4-ary term $t$ satisfying $t(r,a,r,e) = t(a,r,e,a)$~\cite{OptimalStrong}.

What happens when we drop the local finiteness assumption?
While there is no weakest non-trivial strong Maltsev condition for general varieties~\cite{Taylor88,Noncovering}, it turned out that there is one for idempotent varieties~\cite{Olsak}; for instance, the existence of a $6$-ary $t$ satisfying $t(x,y,y,y,x,x)=t(y,x,y,x,y,x)=t(y,y,x,x,x,y)$.
Such a condition cannot be a loop condition: A. Kazda~\cite{Kazda} proved that the free idempotent algebra over $\{x,y\}$ in the signature consisting of one ternary operation symbol $w$ modulo the weak near unanimity identities $w(x,y,y)=w(y,x,y)=w(y,y,x)$ does not satisfy any non-trivial loop condition. 

On the other hand, a consequence of the main result of this paper, Theorem~\ref{sym-class}, is that the existence of a 6-ary Siggers term is a weakest non-trivial loop condition \emph{in general}, that is, for varieties that are not necessarily idempotent or locally finite. To describe the main result, it will be convenient to assign a directed graph (digraph) to each loop condition in a natural way:

\begin{definition}
  \label{GC-def}
  Let $\lc C$ be a loop condition given by an identity
  $t(x_1, \dots, x_n)=t(y_1, \dots, y_n)$, where
  $x_1, \dots, x_n, y_1, \dots, y_n$ are variables from a set $V$.
  The \emph{digraph of $\lc C$}, denoted $\relstr G_{\lc C}$, is the
  digraph $(V,E)$ with vertex set $V$ equals to the variable set and edge
  set $E=\{(x_i, y_i): i=1, \dots, n\}$.
\end{definition}

For example, the digraph of the loop condition $s(x,y)=s(y,x)$ is $\relstr{K}_2$ and 
the digraph of the $6$-ary loop condition by Siggers is $\relstr{K}_3$, where $\relstr{K}_i$ denotes the complete loopless digraph on $i$ vertices.

It is easy to see that two loop conditions, whose associated digraphs are isomorphic, are equivalent. Therefore, we may talk about ``loop condition $\relstr{G}$'' (or sometimes ``$\relstr{G}$ loop condition'') instead of ``a loop condition whose associated digraph is $\relstr{G}$''.  Also note that $\relstr{G}_{\lc C}$ contains a loop if and only if $\lc C$ is trivial, i.e., satisfied in every algebra.

Our main result, Theorem~\ref{sym-class}, fully classifies the stregth of undirected loop conditions: Each such nontrivial condition is either equivalent to the existence of a commutative term (condition $\relstr{K}_2$) or the existence of a Siggers $6$-ary term (condition $\relstr{K}_3$).

\medskip
\begin{figure}[!ht]
  \def\svgwidth{2cm}
  \centering
     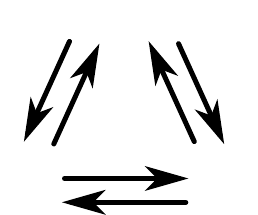
  \caption{The digraph of the Siggers term.}
\end{figure}

\section{Preliminaries}


An $n$-ary operation $f$ on a set $A$ is \emph{compatible} with an $m$-ary relation $R \subseteq A^m$, or $R$ is \emph{compatible} with $f$, if $f(\tuple{r}_1, \ldots, \tuple{r}_n) \in R$ for any $\tuple{r}_1, \ldots, \tuple{r}_n \in R$. Here (and later as well) we abuse the notation and use $f$ also for the $n$-ary operation on $A^m$ defined from $f$ coordinate-wise.

An algebra $\alg A=(A, f_1, f_2, \ldots)$ is said to be compatible with a relational structure $\relstr A=(A, R_1, R_2, \ldots)$ if all the operations $f_1, f_2, \ldots$ are compatible with all the relations $R_1, R_2, \ldots$.

\emph{Digraph} is a relational structure $\relstr G=(G, E)$ with one binary relation $E$. If $E$ is symmetric, then the digraph $\relstr G$ is called \emph{undirected}. A \emph{loop} in $\relstr G$ is a pair of the form $(x,x)\in E$.

We will extensively use a standard method for building compatible
relations from existing ones -- primitive positive (\emph{pp}, for short) definitions.
A relation $R$ is {\it pp-definable\/} from relations
$R_1,\ldots,R_n$ if it can be defined by a first order formula using variables,
existential quantifiers, conjunctions, the equality relations, and predicates $R_1,\ldots,R_n$.
Clauses in pp-definitions are also referred to as \emph{constraints}.
Recall that $R_1,\ldots,R_n$ are compatible with an algebra, then so is $R$.

It is sometimes helpful to visualize a pp-definition of $k$-ary relation $R$ from a digraph $\relstr G$ as a digraph $\relstr{H}$ with $k$ distinguished vertices $u_1, \ldots, u_k$:
the vertices of $\relstr{H}$ are the variables, its edges correspond to the constraints, and the distinguished vertices are the free variables.
Observe that $(v_1, \ldots, v_k)$ is in $R$ if and only if there is a digraph homomorphism $\relstr{H} \to \relstr{G}$ which maps $u_i$ to $v_i$ for each $i = 1, \ldots, k$. In a similar way, we can visualize pp-definitions from a relational structure consisting of more than one binary relation. 

A \emph{pp-power} of a relational structure $\relstr A$ on $A$ is a relational structure $\relstr{B}$ on $A^l$ whose relations are pp-definable from $\relstr A$ in the sense that a $k$-ary relation from $\relstr{B}$, regarded as a $(k \cdot l)$-ary relation on $A$, is pp-definable from $\relstr A$. Observe that if $\relstr{A}$ is compatible with an algebra $\alg A$, then the pp-power $\relstr{B}$ is compatible with the algebraic power $\alg A^l$.


For more background on pp-definitions and its relevance for constraint satisfaction problems we refer the reader to \cite{PolSurvey}.

Finally, we state the promised correspondence between loop conditions and loops in digraphs.






\begin{proposition}
\label{loop-eq}
Let $\var V$ be a variety and $\lc C$ a loop condition.
The following conditions are equivalent.
\begin{enumerate}
\item[(i)] $\var V$ satisfies $\lc C$.
\item[(ii)] For every $\alg A\in\var V$ and for every digraph
  $\relstr G = (A, G)$ compatible
  with $\alg A$, the following holds: If there is a digraph homomorphism
  $\relstr G_{\lc C}\to \relstr G$ then $\relstr G$ contain a loop.
\item[(iii)] For every $\alg A\in\var V$ and for every digraph
  $\relstr G = (A, G)$ compatible
  with $\alg A$, the following holds: If a subdigraph of $\relstr G$ is
  isomorphic to $G_{\lc C}$ then $G$ contains a loop.
\end{enumerate}
\end{proposition}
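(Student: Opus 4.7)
The plan is to establish the cycle of implications (i) $\Rightarrow$ (ii) $\Rightarrow$ (iii) $\Rightarrow$ (i), with the last implication being the substantive one that uses a free-algebra construction.

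For (i) $\Rightarrow$ (ii), assume $\var V$ satisfies $\lc C$ via a term $t$, and fix $\alg A\in\var V$, a compatible graph $G$, and a homomorphism $h\colon G_{\lc C}\to G$. Each edge $(u_i,v_i)$ of $G_{\lc C}$ maps to an edge $(h(u_i),h(v_i))\in G$. Applying $t$ coordinatewise in $\alg A^2$ and using compatibility of $G$, the pair $(t^{\alg A}(h(u_1),\ldots,h(u_n)),\,t^{\alg A}(h(v_1),\ldots,h(v_n)))$ lies in $G$. But the defining identity of $\lc C$, applied under the assignment $x\mapsto h(x)$, forces the two coordinates to coincide, producing a loop in $G$.

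The implication (ii) $\Rightarrow$ (iii) is immediate: if a subgraph of $G$ is isomorphic to $G_{\lc C}$, then the corresponding inclusion is a graph homomorphism $G_{\lc C}\to G$, so (ii) applies.

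For (iii) $\Rightarrow$ (i), let $\alg F\in\var V$ be the free algebra over the set of variables occurring in $\lc C$, and let $G\leq\alg F^2$ be the subalgebra generated by the pairs $\{(u_i,v_i):1\le i\le n\}$. Then $G$ is compatible with $\alg F$ by construction. Since the variables are pairwise distinct free generators of $\alg F$, the vertex set of $G_{\lc C}$ embeds into $\alg F$ and the edges $(u_i,v_i)$ exhibit a subgraph of $G$ isomorphic to $G_{\lc C}$. Hypothesis (iii) supplies a loop $(a,a)\in G$. Every element of $G$ has the form $(s^{\alg F}(u_1,\ldots,u_n),\,s^{\alg F}(v_1,\ldots,v_n))$ for some $n$-ary term $s$, so the loop gives an equation $s(u_1,\ldots,u_n)=s(v_1,\ldots,v_n)$ valid in $\alg F$. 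Freeness promotes this to an identity valid throughout $\var V$, so $\var V$ satisfies $\lc C$.

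The main subtlety sits in (iii) $\Rightarrow$ (i): one has to make sure that the construction genuinely produces a compatible graph in which a faithful copy of $G_{\lc C}$ sits, i.e.\ that the variables stay distinguishable after passing to $\alg F^2$ and that no spurious edges are needed to realise the subgraph. Working in the free algebra is precisely what avoids both issues, and it is also what converts a loop back into a universally valid equation.
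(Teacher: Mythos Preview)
Your proof is correct and follows essentially the same approach as the paper: the same cycle (i)$\Rightarrow$(ii)$\Rightarrow$(iii)$\Rightarrow$(i), with (i)$\Rightarrow$(ii) by applying the term to the images of the edges, (ii)$\Rightarrow$(iii) trivial, and (iii)$\Rightarrow$(i) via the free algebra over the variables and the subalgebra of $\alg F^2$ generated by the edge pairs. Your write-up is in fact slightly more explicit than the paper's in naming the homomorphism $h$ and in spelling out that every element of the generated subalgebra has the form $(s^{\alg F}(u_1,\ldots,u_n),s^{\alg F}(v_1,\ldots,v_n))$.
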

\begin{proof}
  (i)$\Rightarrow$(ii)
  Let $\lc C$ be of the form
  $t(x_1, \ldots, x_n) = t(y_1, \dots, y_n)$ and
  $f\colon \relstr G_{\lc C}\to\relstr G$ be a digraph
  homomorphism. Then $(f(x_i), f(y_i))\in G$ for all
  $i=1,\ldots, n$. By compatibility of $\relstr G$ with $\alg A$ also
  $$(t(f(x_1), \ldots, f(x_n)), t(f(y_1), \dots, f(y_n)))\in G,$$
  Since equality
  $$t(f(x_1), \ldots, f(x_n)) = t(f(y_1), \dots, f(y_n))$$
  is ensured by $\lc C$, we get the desired loop in $\relstr G$. 

(ii)$\Rightarrow$(iii) Trivial.

(iii)$\Rightarrow$(i) Let $\alg F$ be the free algebra in $\var V$
generated by the vertices of $\relstr G_{\lc C}$ and let $E$ be the subuniverse of
$\alg F^2$ generated by the edges of $\relstr G_{\lc C}$.
The set $E$ is the edge--set a
digraph compatible with $\alg F$ containing $\relstr G_{\lc C}$ as a subgraph. Therefore, by
(iii), there is loop $(a,a)$ in $E$. The pair $(a,a)$ is generated from edges of $\relstr G_C$, so there is a term operation $t$ of $\alg F^2$ taking edges of
$\relstr G_{\lc C}$ as arguments and returning $(a,a)$. Thus, in $\alg F$, the
term $t$ satisfies the loop condition $C$, at least when we plug in the generators. By the universality of free algebra, the term $t$
satisfies $\lc C$ in general.
\end{proof}

\begin{corollary}
\label{graph-homo}
Let $\lc C$, $\lc D$ be loop conditions. If there is a digraph
homomorphism $\relstr G_{\lc C}\to \relstr G_{\lc D}$, then $\lc C$ implies
$\lc D$. In particular, $\lc C$ implies $\lc D$ whenever $\relstr G_{\lc C}$ is a subdigraph of
$\relstr G_{\lc D}$.
\end{corollary}

\begin{corollary}
  \label{graph-eq}
  Loop conditions with homomorphically equivalent digraphs are
  equivalent. In particular, loop conditions with isomorphic digraphs
  are equivalent.
\end{corollary}

%
%
%
%

\section{Undirected case}

In this section, we will focus on loop conditions with 
undirected digraphs, briefly graphs.


We start with several simple consequences of 
Corollary~\ref{graph-homo}.


\begin{proposition}
  \label{triv-sym-impl}
  \begin{enumerate}
    \item Any loop condition of a bipartite graph is equivalent to the edge loop condition (commutativity).
    \item The $(l+2)$-cycle loop condition implies the $l$-cycle loop condition for any odd length $l\geq3$.
    \item The $n$-clique loop condition implies the $(n+1)$-clique loop condition for any size $n\geq3$.
    \item For any non-bipartite graph $\relstr G$ there is an odd
      length $l\geq 3$ such that the $l$-cycle loop condition implies
      the loop condition given by $\relstr G$.
    \item For any loopless digraph $\relstr G$ there is a size
      $n\geq 3$ such that the loop condition given by $\relstr G$
      implies the $n$-clique loop condition.
  \end{enumerate}
\end{proposition}

\medskip
\begin{figure}[!ht]
  \def\svgwidth{10cm}
  \centering
     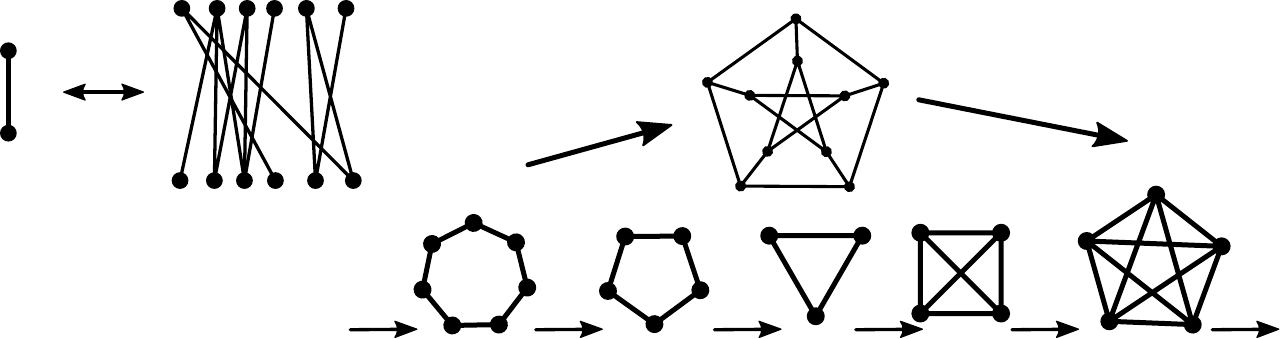
  \caption{Scheme of the easy implications between
  undirected loop conditions given by Proposition~\ref{triv-sym-impl}.}
\end{figure}

Our aim in the rest of the chapter is to reverse the second and the third implication, that is,
$(2n+3)$-cycle to $(2n+1)$-cycle and $(n+2)$-clique to $(n+3)$-clique,
where $n\geq 1$. It will follow that all the loopless
non-bipartite graphs are equivalent as loop conditions and that they
are weakest among all non-trivial loop conditions.

\begin{proposition}
\label{larger_cycles}
The $l$-cycle loop condition implies the $(l+2)$-cycle loop
condition for any odd length $l\geq 3$.
\end{proposition}
\begin{proof}
There is a graph homomorphism from the
$l^2$-cycle to the $(l+2)$-cycle because both cycles are odd and
$l^2\geq l+2$.
It is thus sufficient to show that the $l$-cycle loop condition
implies the $l^2$-cycle one.

We use Proposition~\ref{loop-eq}. Let $\alg A$ be an
algebra on a set $A$ such that every digraph compatible with $\alg A$ containing a
homomorphic image of the $l$-cycle has a loop. We need to prove that every
compatible digraph $\relstr G=(A, G)$ containing a homomorphic image of the
$l^2$-cycle has a loop. Take such a digraph $\relstr G$ where the cycle is
formed by vertices $v_0, v_1, \ldots, v_{l^2-1}$. We may assume that $G$ is symmetric, since the set of symmetric edges of $\relstr{G}$ is clearly pp-definable from $\relstr G$.

\medskip
\begin{figure}[!ht]
  \def\svgwidth{8cm}
  \centering
     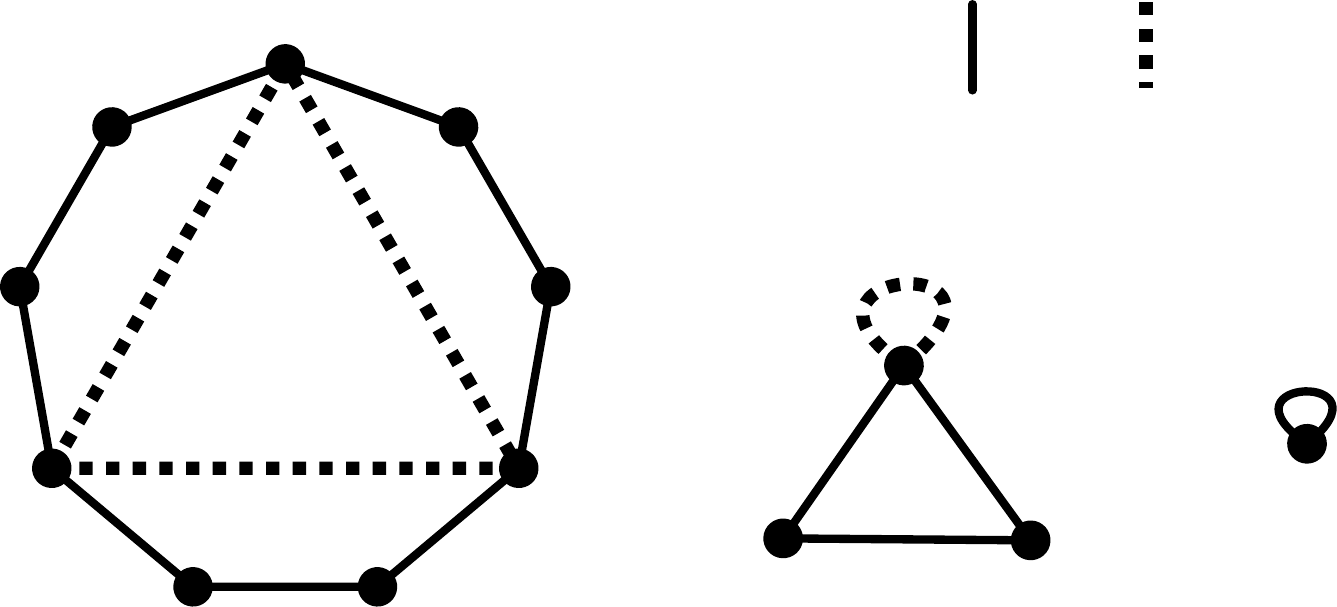
  \caption{Getting a loop from 9-cycle using the triangle
  loop condition.}
\end{figure}

We construct a binary relation $H$ on $A$: vertices $x$ and $y$ are by
definition $H$-adjacent if there is a $G$-walk from $x$ to $y$ of
length $l$. Since $H$ is pp-defined from a compatible relation, it is
compatible with $\alg A$ too.
Vertices $v_0, v_l, v_{2l}, \ldots, v_{l(l-1)}$ form a
cycle in $H$ of length $l$. By the assumption on algebra $\alg A$,
there is a loop in $H$. A loop in $H$ is a homomorphic image of an
$l$-cycle in $G$. Using the assumption on $\alg A$ again we
get a loop in $G$.
\end{proof}

\begin{proposition}
\label{smaller_cliques}
The $(n+1)$-clique loop condition implies the $n$-clique loop
condition for any $n\geq 3$.
\end{proposition}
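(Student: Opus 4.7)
My plan is to apply Proposition~\ref{loop-eq} following the same scheme as the previous proposition. Assume $\alg A$ lies in a variety satisfying the $(n+1)$-clique loop condition, and let $G$ be a graph compatible with $\alg A$ that contains an $n$-clique on vertices $v_1,\ldots,v_n$; the goal is to produce a loop in $G$. I would construct from $G$, via a pp-definition, an auxiliary graph $H$ compatible with $\alg A$ (or with an algebraic power $\alg A^m$) satisfying (a) $H$ contains a homomorphic image of $K_{n+1}$, and (b) any loop in $H$ yields a loop in $G$. The $(n+1)$-clique condition applied to $H$ then gives an $H$-loop, which by (b) pulls back to the required $G$-loop.

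A first natural candidate is $H$ on $A^{n-1}$ defined by $(\bar a, \bar b)\in H \iff a_i \sim_G b_j$ for all $i,j$. This is pp-definable from $G$, so $H$ is compatible with $\alg A^{n-1}$; a loop $\bar a \sim_H \bar a$ forces $a_i \sim_G a_i$ for each $i$, giving (b) immediately. Moreover, the constant tuples $T_k = (v_k,\ldots,v_k)$ for $k=1,\ldots,n$ form a copy of $K_n$ in $H$, since $v_k \sim_G v_l$ whenever $k\neq l$.

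The main obstacle, and the technical heart of the argument, is to upgrade this $K_n$ in $H$ to a $K_{n+1}$. A candidate extra tuple $T_0$ adjacent in $H$ to every $T_k$ would need every entry $T_0(i)$ to be $G$-adjacent to every $v_k$, which is impossible in a loopless $G$ whose edges are confined to the given $n$-clique. To bypass this I would refine the pp-definition: either weaken the ``all crosses'' edge condition to a more subtle adjacency pattern, or pass to a higher-dimensional pp-power $A^m$, so that the missing $(n+1)$-th clique vertex in $H$ can be furnished by a non-constant tuple whose mixed entries collectively play the role of an additional vertex of the clique. Arranging the pp-definition so that it simultaneously realises (a) and preserves (b) is precisely where the delicate work lies.
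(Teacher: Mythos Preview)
Your proposal correctly identifies the overall scheme (build a pp-definable auxiliary graph containing $K_{n+1}$, apply the hypothesis there, and pull a loop back to $G$), and you are honest that the step you label ``the technical heart'' is left undone. That is exactly where the proof lives, so what you have is a plan rather than a proof.

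Two specific points. First, your candidate $H$ on $A^{n-1}$ with the ``all crosses'' edge relation is, as you note, a dead end: no tuple over a loopless $n$-clique can be $H$-adjacent to all constant tuples, and there is no evident tweak of the exponent that repairs this while keeping (b). Second, your condition (b), that a loop in $H$ should \emph{directly} produce a loop in $G$, turns out to be too strong a requirement. The paper does not achieve this in one shot. Instead it sets up a short cascade: a pp-defined $4$-ary relation $R$ (built around an internal $(n-2)$-clique pattern in $G$), then a graph $F(x,y)\Leftrightarrow\exists u\,R(u,u,x,y)$ on $A$, and finally a graph $Q$ on $A^2$ assembled from $R$ and $F$. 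The $n$-clique $\{a_1,\ldots,a_n\}$ yields an $n^2$-clique in $Q$; the $(n+1)$-clique condition gives a loop in $Q$; unpacking that loop produces an $(n+1)$-clique in $F$; applying the condition again gives a loop in $F$; unpacking \emph{that} produces an $(n+1)$-clique in $G$; one more application finishes. So the hypothesis is used three times, and the role of (b) is replaced by ``a loop upstairs yields a bigger clique downstairs.'' This bootstrapping idea is the missing ingredient in your outline, and it is not something that falls out of refining the all-crosses construction.
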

\begin{proof}
As in the previous proposition, we use Proposition~\ref{loop-eq}. Let $\alg A$ on a set $A$
be an algebra satisfying the $(n+1)$-clique loop condition.
Let $\relstr G=(A, G)$ be a digraph
compatible with $\alg A$ containing an $n$-clique
$a_1, a_2, \ldots, a_n$ as a subgraph.
It suffices to prove that $\relstr G$ has to have a loop. 
As before we may assume that $\relstr{G}$ is a graph.

Let us pp-define a 4-ary relation $R$ on $A$ as
follows. $R(u,v,x,y)$ if and
only if there are elements $x_1, \ldots, x_{n-2}, w\in A$ such that
all the vertices $x_i$ are pairwise $G$-adjacent to each other, they are also
$G$-adjacent to vertices $x, y, v, w$ and moreover
$G(u,w), G(w,x), G(v,y)$.
From $R$ we pp-define a binary relation
$F(x,y)\Leftrightarrow\exists u\in A\colon R(u,u,x,y)$.

\medskip
\begin{figure}[!ht]
  \def\svgwidth{8cm}
  \centering
     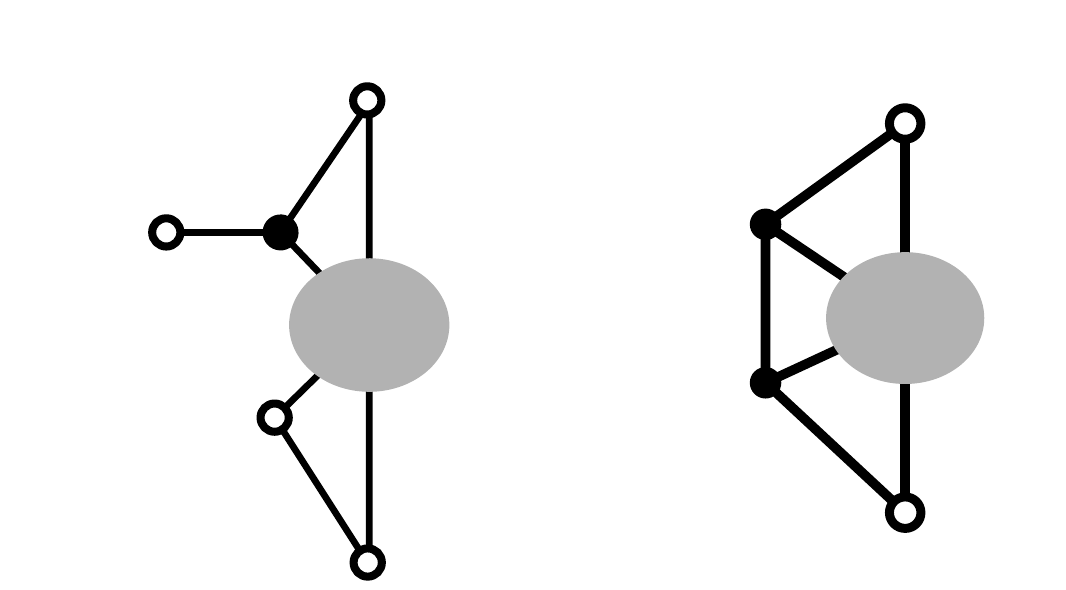
  \caption{A vizualisation of the definitions of $R$ and $F$.}
\end{figure}

Observe that the pp-definition of $F$ is symmetric, so $F$ itself is
symmetric. We regard $F$ as the set of edges of a graph $\relstr F=(A,F)$ compatible with $\alg A$.

\begin{claim}
  \label{R-description}
  Let $u,v,x,y\in \{a_1, \ldots, a_n\}$. Then $R(u,v,x,y)$ whenever one of the following condition are met:
  \begin{enumerate}
  \item[(a)] $u\neq v$ and $x=y\neq v$.
  \item[(b)] $u=v$ and $x=u=v$, $y\neq x$.
  \end{enumerate}
\end{claim}
To prove the claim, we need to find correct values of variables
$x_1,\ldots, x_{n-2}, w$ in the definition of $R$ to meet the constraints
of $R$. We do it separately for the two cases.
 \begin{enumerate}
 \item[(a)] We set $w=v$ and variables $x_i$ to vertices $a_i$
   different from $v$, $x$.
 \item[(b)] We set $w=y$ and variables $x_i$ to vertices $a_i$
   different from $x$, $y$.
 \end{enumerate}

The next claim immediately follows from the case (b) of the previous one.

\begin{claim}
  \label{F-description}
  If $x,y\in \{a_1, \ldots, a_n\}$ and $x\neq y$, then $F(x,y)$.
\end{claim}

Finally we define a digraph $\relstr Q=(A^2, Q)$, where
the binary relation $Q$ is defined as follows:
$Q((u_1,u_2),(v_1,v_2))$ if and only if there are
$x_1,x_2,\ldots,x_{n+1}$ such that every pair of different indices
$i_1,i_2$, with the possible exception of pairs $\{1,2\}$ and $\{3,4\}$, satisfies
$F(x_{i_1},x_{i_2})$ and, moreover, $R(u_1,v_1,x_1,x_2)$ and $R(u_2,v_2,x_3,x_4)$.

\medskip
\begin{figure}[!ht]
  \def\svgwidth{9cm}
  \centering
     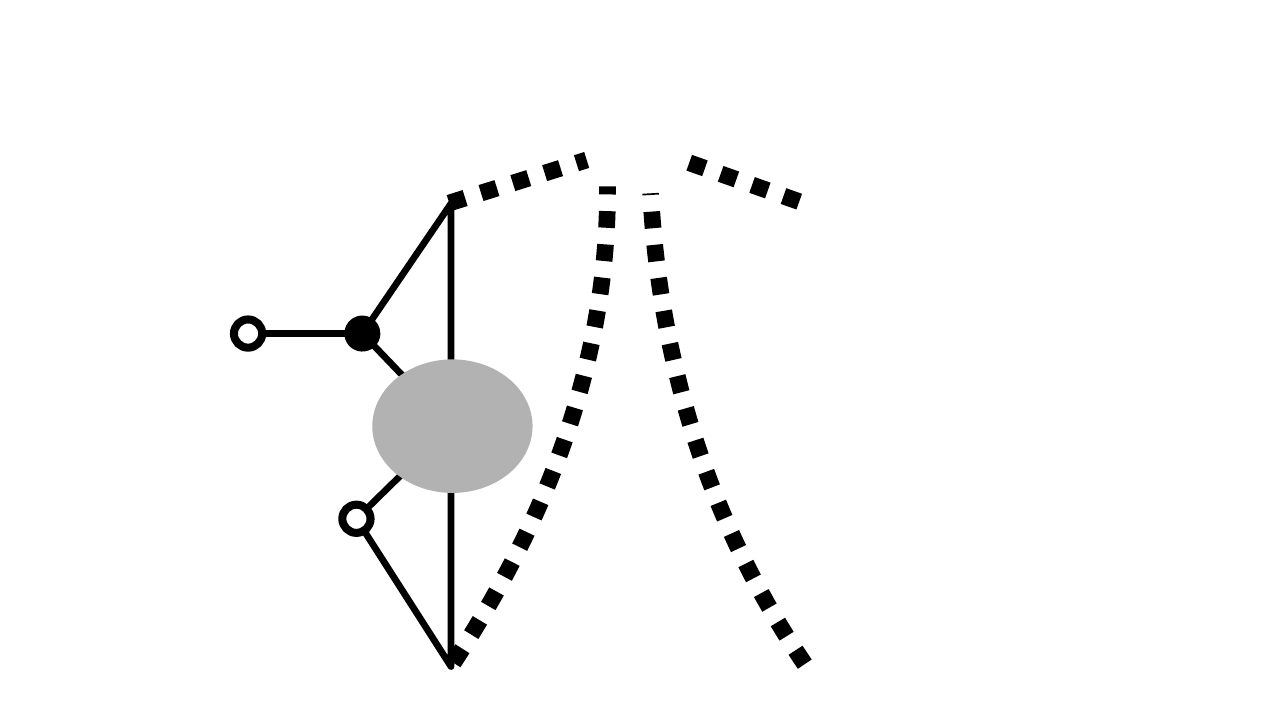
  \caption{A visualization of the definitions of $Q$.}
\end{figure}

\begin{claim}
  If $u_1, u_2, v_1, v_2\in \{a_1, \ldots, a_{n+1}\}$ and
  $(u_1,u_1) \neq (v_2,v_2)$, then $Q((u_1,u_1), (v_2,v_2))$.
\end{claim}
  To prove the claim, let variables $x_1, \ldots, x_{n+1}$ be as in the
  definition of $Q$. We satisfy the constraints of $Q$ by means of 
  Claims~\ref{R-description},~\ref{F-description}. Because of $F$ clauses in the definition of $Q$, all 
  pairs of variables from $\{x_1, \ldots, x_{n+1}\}$ should differ, with two possible exceptions
  $x_1=x_2$ and $x_3=x_4$.
  We analyse cases according to equalites among $u_1,u_2,v_1,v_2$. In each
  case we use Claim~\ref{R-description} and assign suitable values
  to $x_1,x_2,x_3,x_4$.
  \begin{enumerate}
  \item[(a)] $u_1\neq v_1$ and $u_2\neq v_2$: We want to use~(a), that is, we want to have $x_1=x_2\neq v_1$ and $x_3=x_4\neq v_2$. First we
    choose $x_1=x_2$, then $x_3=x_4$ different from $v_2$ and
    $x_1$. This is possible since $n\geq 3$.
  \item[(b)] $u_1=v_1$ and $u_2\neq v_2$: We want to satisfy $x_1=u_1$,
    $y_1\neq x_1$ (and use (b)), and
    $x_3=x_4\neq v_2$ (and use (a)). So we put $x_1=u_1$, then choose a
    value for $x_3=x_4$ different from $x_1, v_2$ and, finally, we choose
    a value for $x_2$ different from $x_1, x_3$.
  \item[(c)] $u_1\neq v_1$ and $u_2=v_2$ is analogous to the previous case.
  \item[(d)] $u_1=v_1$ and $u_2=v_2$ can not happen since
    $(u_1,u_1) \neq (v_2,v_2)$.
  \end{enumerate}
  In every case $x_1=x_2$ or $x_3=x_4$, therefore the remaining variables $x_i$
  can be completed so that all the constraints of $Q$ are satisfied.

Let us finish the proof. The digraph $\relstr Q$ is compatible with the algebra
$\alg A^2$ since it is a pp-power of $\relstr G$. Moreover, $\relstr Q$
contains a clique of size $n^2\geq n+1$. The $n$-clique loop condition
holds also for $\alg A^2$, so there is a loop in $\relstr Q$.

The loop in $\relstr Q$ is represented by elements $x_1,\ldots,x_{n+1}$ in
$A$ such that $F(x_i,x_j)$ whenever $i\neq j$. Since $\relstr F$ is pp-defined
from $\relstr G$, it is compatible with $\alg A$. Therefore, there is
a loop in $\relstr F$.

Finally, a loop in $\relstr F$ yields a $(n+1)$-clique in $\relstr G$ and,
consequently, the sought after loop in $\relstr G$.
\end{proof}

\begin{theorem}
  \label{sym-class}
There are exactly three equivalence classes of loop conditions given by  undirected digraphs $\relstr G$: 
\begin{enumerate}
\item[(1)] loop conditions $\relstr G$, where $\relstr G$ is bipartite,
\item[(2)] loop conditions $\relstr G$, where $\relstr G$ is non-bipartite and loopless,
\item[(3)] loop conditions $\relstr G$, where $\relstr G$ contains a loop (trivial).
\end{enumerate}
Conditions (1) imply (2) imply (3).
Conditions (2) are the weakest non-trivial loop conditions.
\end{theorem}

\begin{proof}
If a graph $\relstr G$ is bipartite (contains a loop, respectively), then the $\relstr{G}$ loop condition is equivalent to the edge loop condition by item 1 of Proposition~\ref{triv-sym-impl} (is trivial, respectively). If $\relstr{G}$ is non-bipartite and loopless, then the $\relstr{G}$ loop condition implies a clique loop condition (item 5 of Proposition~\ref{triv-sym-impl}), which implies the triangle loop condition (by Proposition~\ref{smaller_cliques}), which  implies the $l$-cycle loop condition for any odd $l$ (by Proposition~\ref{larger_cycles}), which, finally, implies the $\relstr{G}$ loop condition (by item 4 of Proposition~\ref{triv-sym-impl}).

Clearly, conditions (1) imply (2) imply (3), and (3) do not imply (2). The implication from (2) to (1)  cannot be reversed even for idempotent finitely generated varieties: an example of an algebra satisfying (2) but not (1) is the algebra $(\{0,1,2\}, m)$ where $m(x,y,z)=x+y-z$ modulo 3.
\end{proof}

Theorem~\ref{sym-class} also provides an alternative proof to the fact~\cite{Olsak} that the existence of a near unanimity
term implies the existence of a Siggers term. Recall that a \emph{near unanimity} (NU) term is an $n$-ary term $t$ satisfying the identity
$t(x,\ldots,x,y,x,\ldots,x)= x$ for all positions of $y$.

\begin{theorem}
  If an algebra (or a variety) has an $n$-ary NU term, then it also has the Siggers term.
\end{theorem}
\begin{proof}
  Let $t$ be the NU term. The value $t(x_1, x_2, \ldots, x_n)$ may be
  expressed in the following two ways.
  \[
  \vbox{\def\c#1{\hfil#1\hfil}
  \halign{$#{}$&$t(t(\c{#},{}$&$\c{#},{}$&$\c{#},\ldots,{}$&$\c{#}),{}$&%
                  $t(\c{#},{}$&$\c{#},{}$&$\c{#},\ldots,{}$&$\c{#}),{}$&%
       $),\ldots,{}t(\c{#},\ldots,{}$&$\c{#},{}$&$\c{#},{}$&$\c{#}))$&${}#$\cr
 &x_2&x_1&x_1&x_1&  x_2&x_3&x_2&x_2&   x_n&  x_n   &  x_n  &x_1&\cr
=&x_1&x_2&x_3&x_n&  x_1&x_2&x_3&x_n&   x_1&x_{n-2}&x_{n-1}&x_n&.\hss\cr
  }}
  \]
  This identity may be interpreted as a loop identity (whose graph is the $n$-clique) for a composed $n^2$-ary term.
  Therefore the $n$-clique loop condition is satisfied and then also the
  triangle loop condition is satisfied since all the loop conditions given by non-bipartite loopless graphs are equivalent.  
\end{proof}

\section{Directed case}

A digraph $\relstr G$ is said to be {\it smooth\/} if every vertex has an incoming
and an outgoing edge; $\relstr G$ is said to have
{\it algebraic length 1} if there is no graph homomorphism from
$\relstr G$ to a directed cycle of length greater than one.
The following theorem holds for finite algebras~\cite{OptimalStrong}.

\begin{theorem}
  Let $\alg A$ be a finite algebra. Let $\relstr G, \relstr H$ be weakly connected smooth
  digraphs of algebraic length one. Then $\alg A$ satisfies loop
  condition $\relstr G$  if and only if it satisfies loop condition  $\relstr H$.
\end{theorem}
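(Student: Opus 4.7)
By Proposition~\ref{loop-eq}, the theorem is equivalent to the following graph-theoretic statement: if $\alg A$ satisfies the $G$ loop condition, then every graph $K$ compatible with $\alg A$ admitting a homomorphism $H\to K$ contains a loop. The converse has an identical proof since the roles of $G$ and $H$ in the hypotheses are symmetric.

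The strategy would follow the template of the propositions of Section~4. Starting from such a $K$, we construct via pp-definitions (possibly on a power $\alg A^n$) an auxiliary compatible graph $K^*$ admitting a homomorphism from $G$, apply the $G$ loop condition on $\alg A^n$ to obtain a loop in $K^*$, and decode this loop back into a loop in $K$. A natural auxiliary construction is a \emph{walk power}: for integers $p, q$ and a sign pattern $\sigma\in\{+,-\}^{p+q}$, let $K^{(p,q,\sigma)}$ have an edge $(x,y)$ whenever there is a length-$(p+q)$ walk from $x$ to $y$ in $K$ whose $i$-th step is a $K$-edge traversed in the direction indicated by $\sigma_i$. This is pp-definable from $K$ and hence compatible with $\alg A$, and the construction can be lifted to $\alg A^n$ in the style of the $(n+1)$-clique to $n$-clique proof where pp-powers on $\alg A^2$ were essential.

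The substantive content, which is the main contribution of~\cite{OptimalStrong}, consists of two digraph-combinatorial claims. First, for any finite smooth weakly connected graphs $G, H$ of algebraic length one there exist parameters $p, q, \sigma, n$ and a pp-power $K^*$ of $K$ on $\alg A^n$ such that $H\to K$ implies $G\to K^*$. Second, a loop in $K^*$ decodes back to a loop in $K$, in the spirit of the final step of the clique proposition, where a loop in a compatible $Q$ on $\alg A^2$ yielded an $(n+1)$-clique in $G$ and thereby a genuine loop. The main obstacle is the first claim: smoothness is needed to freely extend partial walks, algebraic length one is used to synchronise the closing of cycles of $G$ modulo walk lengths, and finiteness of $\alg A$ keeps the parameter $n$ bounded uniformly. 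Explicitly constructing the homomorphism — for instance, by spanning $G$ with a tree, walking along tree edges using smoothness, and closing the non-tree edges with algebraically balanced walks in $H$ — together with the decoding step, is the technical heart of the argument in~\cite{OptimalStrong}.
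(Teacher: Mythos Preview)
The paper does not prove this theorem at all; it merely states it with a citation to~\cite{OptimalStrong} and immediately moves on to its consequences and an open problem. There is therefore no ``paper's own proof'' to compare your proposal against.

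As for the proposal itself: it is a plan rather than a proof, and you say as much, repeatedly deferring the substantive steps to~\cite{OptimalStrong}. The reduction via Proposition~\ref{loop-eq} and the idea of building auxiliary compatible graphs by pp-definitions and pp-powers are the right general framework. However, the two claims you isolate --- producing parameters $(p,q,\sigma,n)$ so that $H\to K$ forces $G\to K^*$, and decoding a loop in $K^*$ back to a loop in $K$ --- are left entirely unargued, and the first is where all the difficulty lies. In particular, your remark that ``finiteness of $\alg A$ keeps the parameter $n$ bounded uniformly'' is too vague to be usable: you have not explained what role $|A|$ plays, nor why a single pp-power suffices rather than an unbounded family. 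The actual arguments in the literature (absorption, cyclic terms for finite Taylor algebras) are quite different in flavour from the bare walk-power construction you sketch, and it is not clear your outline can be completed without importing that machinery. As written, the proposal correctly locates the theorem in the paper's framework but does not supply any of the mathematics needed to establish it.
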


Less formally, all connected smooth loop conditions with algebraic length
1 are equivalent for finite algebras. This gives us a simpler weakest loop
condition than the triangle for finite algebras: $s(a,r,e,a)=s(r,a,r,e)$.

For general varieties it is no longer true that all such loop conditions are equivalent. However, very recently, we have shown that all loop conditions given by strongly connected graphs of algebraic length 1 are. These results will appear in a forthcoming paper.


%
%

\bibliographystyle{plain}
\bibliography{bib-file.bib}

\end{document}